\documentclass[letterpaper]{amsart}

\usepackage{hyperref}

\usepackage{amssymb}
\usepackage{amsmath}
\usepackage{amsthm}
\usepackage{enumerate}
\usepackage{graphicx}
\usepackage{caption}

\theoremstyle{plain}
\newtheorem{thm}{Theorem}[section]

\newtheorem{lem}[thm]{Lemma}

\newtheorem{obs}[thm]{Observation}

\theoremstyle{definition}

\theoremstyle{remark}
\newtheorem*{remark}{Remark}

\newtheorem*{acknowledgements}{Acknowledgements}

\newcommand{\what}{\widehat}
\DeclareMathOperator{\re}{Re}

\begin{document}

\title{The converse of the Schwarz Lemma is false}
\author{Maxime Fortier Bourque}
\address{Department of Mathematical and Computational Sciences, University of Toronto Mississauga, 3359 Mississauga Road, Mississauga, ON, L5L 1C6, Canada}
\email{maxforbou@gmail.com}

\keywords{Schwarz Lemma, hyperbolic surfaces, length of geodesics}
\subjclass[2010]{30F45, 30F60, 32G15}

\begin{abstract}
Let $h:X \to Y$ be a homeomorphism between hyperbolic surfaces with finite topology. If $h$ is homotopic to a holomorphic map, then every closed geodesic in $X$ is at least as long as the corresponding geodesic in $Y$, by the Schwarz Lemma. The converse holds trivially when $X$ and $Y$ are disks or annuli, and it holds when $X$ and $Y$ are closed surfaces by a theorem of W. Thurston. We prove that the converse is false in all other cases, strengthening a result of Masumoto.
\end{abstract}

\maketitle

\section{Introduction}

In this paper, a \emph{hyperbolic surface} is a Riemann surface (a connected $1$-dimensio\-nal complex manifold without boundary) with finitely generated fundamental group and whose universal cover is biholomorphic to the unit disk. Every hyperbolic surface is equipped with its complete Poincar\'e metric coming from the unit disk. Let $h:X \to Y$ be a homeomorphism between two hyperbolic surfaces. We are interested in the relationship between the following conditions:

\begin{enumerate}[(a)]
\item $h$ is homotopic to a conformal embedding;
\item $h$ is homotopic to a holomorphic immersion;
\item $h$ is homotopic to a holomorphic map;
\item $h$ does not increase the length of any homotopy class of closed curve;
\item $h$ does not increase the length of any homotopy class of simple closed curve.
\end{enumerate}

By condition (d) we mean that for every closed curve $\alpha$ in $X$ we have the inequa\-lity  $\ell_Y([h(\alpha)])\leq \ell_X([\alpha])$. Here $[\beta]$ is the free homotopy class of the closed curve $\beta$ and $\ell_Z([\beta])$ is the infimum of the lengths of curves $\gamma$ in the set $[\beta]$, where length is measured in the Poincar\'e metric on $Z$. The quantity $\ell_Z([\beta])$ is equal to zero if $\beta$ can be deformed into a point or a cusp, and is equal to the length of the unique geodesic freely homotopic to $\beta$ otherwise. Condition (e) is the same inequality, but restricted to homotopy classes of \emph{simple} (i.e. embedded) closed curves.

\begin{obs}
The implications $(a) \Rightarrow (b) \Rightarrow (c) \Rightarrow (d) \Rightarrow (e)$ hold.
\end{obs}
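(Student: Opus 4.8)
The plan is to take the five conditions in the stated order and observe that four of the implications are immediate from the definitions, so that the whole content of the observation is the single implication $(c) \Rightarrow (d)$, which is a form of the Schwarz--Pick Lemma.

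For $(a) \Rightarrow (b)$: a conformal embedding is by definition an injective holomorphic map, hence a biholomorphism onto its image; in particular its derivative never vanishes, so it is a holomorphic immersion. For $(b) \Rightarrow (c)$: a holomorphic immersion is a fortiori a holomorphic map. For $(d) \Rightarrow (e)$: the homotopy classes of simple closed curves form a subset of all homotopy classes of closed curves, so the inequality asserted in (d) specializes to the one asserted in (e). None of these three requires any work beyond unwinding the definitions.

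For $(c) \Rightarrow (d)$, suppose $f \colon X \to Y$ is holomorphic and homotopic to $h$. Since the universal covers of $X$ and $Y$ are both the unit disk $\mathbb{D}$, and $\mathbb{D}$ is simply connected, the map $f$ lifts (via the lifting criterion for covering spaces) to a map $\widetilde f \colon \mathbb{D} \to \mathbb{D}$ of the universal covers, which is holomorphic because the covering projection onto $Y$ is a local biholomorphism. By the Schwarz--Pick Lemma, $\widetilde f$ does not increase the Poincar\'e metric of $\mathbb{D}$; since the Poincar\'e metrics on $X$ and $Y$ are precisely the metrics induced by the one on $\mathbb{D}$ through the covering projections (which are therefore local isometries), it follows that $f$ does not increase the Poincar\'e metric, i.e. $\ell_Y(f \circ \gamma) \le \ell_X(\gamma)$ for every rectifiable closed curve $\gamma$ in $X$. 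Now fix a closed curve $\alpha$ in $X$ and let $\gamma$ be any closed curve freely homotopic to $\alpha$. Then $f \circ \gamma$ is freely homotopic to $f \circ \alpha$, which is in turn freely homotopic to $h \circ \alpha$ because $f$ is homotopic to $h$; hence $\ell_Y([h(\alpha)]) \le \ell_Y(f \circ \gamma) \le \ell_X(\gamma)$. Taking the infimum over all such $\gamma$ yields $\ell_Y([h(\alpha)]) \le \ell_X([\alpha])$, which is exactly (d). The degenerate cases (where $[\alpha]$ is trivial or peripheral, or where $f$ is constant) are absorbed automatically, since then the relevant infimum is $0$.

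The only point needing any care --- and it is bookkeeping rather than a genuine obstacle --- is this passage between the disk and the surfaces: one must confirm that $f$ genuinely lifts to a \emph{holomorphic} self-map of $\mathbb{D}$, and that the Schwarz--Pick inequality on $\mathbb{D}$ descends to the stated length inequality on $X$ and $Y$. Both are standard consequences of the uniformization of hyperbolic surfaces, so the proof is short.
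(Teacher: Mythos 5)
Your proof is correct and follows essentially the same route as the paper, which likewise dismisses $(a)\Rightarrow(b)\Rightarrow(c)$ and $(d)\Rightarrow(e)$ as trivial and derives $(c)\Rightarrow(d)$ from the Schwarz--Pick Lemma (holomorphic maps between hyperbolic surfaces are $1$-Lipschitz). The only difference is that you spell out the standard lift-to-the-universal-cover argument, which the paper leaves implicit.
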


The implications (a) $\Rightarrow$ (b) $\Rightarrow$ (c) and (d) $\Rightarrow$ (e) are trivial, and the implication (c) $\Rightarrow$ (d) is a consequence of the Schwarz Lemma, which says that holomorphic maps between hyperbolic surfaces are $1$-Lipschitz.

If $X$ and $Y$ are disks or annuli, then (e) $\Rightarrow$ (a). Indeed, up to biholomorphism there is only one hyperbolic surface homeomorphic to a disk. Moreover, any hyperbolic surface homeomorphic to an annulus is biholomorphic to $S^1 \times (0,m)$ for some $m \in (0,\infty]$. There is only one homotopy class of unoriented simple closed curve in the annulus $S^1 \times (0,m)$ and its length is $\pi / m$. If $\pi / m_2 \leq \pi / m_1$, then $m_1 \leq m_2$ and the inclusion map from $S^1 \times (0,m_1)$ to $S^1 \times (0,m_2)$ provides a conformal embedding homotopic to a homeomorphism, which shows that (e) $\Rightarrow$ (a) for annuli.

The implication (e) $\Rightarrow$ (a) also holds when $X$ and $Y$ are closed surfaces or when both have finite area by a theorem of W. Thurston \cite[Theorem 3.1]{Thurston}. 

\begin{thm}[W. Thurston]
Let $h:X \to Y$ be a homeomorphism between hyperbolic surfaces of finite area. If $h$ does not increase the length of any homotopy class of simple closed curve, then $h$ is homotopic to an isometry.
\end{thm}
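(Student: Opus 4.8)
The plan is to use hypothesis (e) to produce a $1$-Lipschitz map in the homotopy class of $h$, and then to show by an area comparison that any such map must be an isometry.

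The first and main step is to invoke Thurston's theorem that, for a homeomorphism $h\colon X\to Y$ between finite-area hyperbolic surfaces, the infimum of $\mathrm{Lip}(f)$ over all $f$ homotopic to $h$ equals $\sup_{\gamma}\ \ell_Y([h(\gamma)])\,/\,\ell_X([\gamma])$, the supremum running over simple closed curves $\gamma$ in $X$ (one shows en route that it coincides with the supremum over all closed curves, or over measured geodesic laminations). One half of this, $\inf\ \ge\ \sup$, is immediate: restricting an $L$-Lipschitz map to the geodesic realizing $\ell_X([\gamma])$ gives $\ell_Y([h(\gamma)])\le L\,\ell_X([\gamma])$. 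Granting the theorem, hypothesis (e) makes the right-hand side $\le 1$, so I obtain maps $f_n\colon X\to Y$ homotopic to $h$ with $\mathrm{Lip}(f_n)\to L\le 1$. After normalizing each $f_n$ to send a fixed horoball neighbourhood of every cusp of $X$ into the corresponding cusp region of $Y$ in the standard way, the family is uniformly Lipschitz and pointwise precompact, so by the Arzel\`a--Ascoli theorem some subsequence converges locally uniformly to a $1$-Lipschitz map $f\colon X\to Y$ homotopic to $h$. Using $1$-Lipschitzness together with the homotopy to $h$ one checks that $f$ is proper and carries cusps to cusps; being properly homotopic to a homeomorphism, $f$ then has mapping degree $\pm 1$ and is surjective.

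The second step is the area rigidity. By Gauss--Bonnet, $\mathrm{Area}(X)=\mathrm{Area}(Y)=-2\pi\chi(X)$ since $X$ and $Y$ are homeomorphic finite-area hyperbolic surfaces. As a Lipschitz map $f$ is differentiable almost everywhere (Rademacher), with $\|df_x\|\le 1$ for a.e.\ $x$; hence both singular values of $df_x$ are $\le 1$ and $\mathrm{Jac}\,f(x)\le 1$ a.e. Since $f$ is surjective, $\#f^{-1}(y)\ge 1$ for all $y$, so the area formula gives $\int_X \mathrm{Jac}\,f\ dA_X=\int_Y \#f^{-1}(y)\ dA_Y\ge \mathrm{Area}(Y)=\mathrm{Area}(X)$, while $\mathrm{Jac}\,f\le 1$ gives the reverse inequality $\int_X \mathrm{Jac}\,f\ dA_X\le \mathrm{Area}(X)$. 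Therefore $\mathrm{Jac}\,f\equiv 1$ and $\#f^{-1}\equiv 1$ almost everywhere, and together with $\|df_x\|\le 1$ this forces $df_x\in O(2)$ for a.e.\ $x$. To conclude that $f$ is a local isometry without assuming smoothness, I would argue metrically: $f(B_X(p,r))\subseteq B_Y(f(p),r)$ by $1$-Lipschitzness, both balls have area $2\pi(\cosh r-1)$, and the Jacobian and near-injectivity facts force $f$ to map $B_X(p,r)$ onto $B_Y(f(p),r)$ up to a null set; letting $r\downarrow s$, every point of $B_Y(f(p),s)$ has a preimage within distance $s$ of $p$, which with $1$-Lipschitzness yields $d_Y(f(p),f(q))=d_X(p,q)$ for $d_X(p,q)$ small. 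A local isometry from the complete connected surface $X$ onto the connected surface $Y$ is a Riemannian covering map, necessarily one-sheeted (its mapping degree is $\pm 1$), hence a homeomorphism. Thus $f$ is an isometry, and $h$ is homotopic to it.

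The main obstacle is the nontrivial half $\inf\ \le\ \sup$ of Thurston's Lipschitz-metric formula; everything downstream of it is soft. Its proof builds a nearly optimal Lipschitz map by hand: by a compactness argument one produces a chain-recurrent geodesic lamination on which the stretch is maximal, extends it to a maximal lamination cutting $X$ and $Y$ into ideal triangles, and defines a map (a \emph{stretch map}) that expands along the lamination by the constant factor $L$ while staying $1$-Lipschitz transversally, with an additional limiting argument to reach the infimal constant and, in the finite-area case, a check on the behaviour near the cusps. An essentially equivalent alternative would be to replace the first step by the assertion that every nontrivial earthquake strictly shortens some simple closed curve --- so that if $h$ were not homotopic to an isometry, the earthquake carrying $Y$ to $X$ would contradict (e) --- but establishing this seems to require comparable machinery.
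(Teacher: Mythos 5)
This theorem is not proved in the paper: it is quoted verbatim from Thurston's preprint with the citation \cite[Theorem 3.1]{Thurston}, so there is no internal proof to compare your argument against. Judged on its own terms, your proposal is a coherent \emph{reduction} rather than a proof: the entire difficulty is concentrated in the first sentence, where you invoke Thurston's formula $\inf_{f\simeq h}\mathrm{Lip}(f)=\sup_\gamma \ell_Y([h(\gamma)])/\ell_X([\gamma])$. That formula (the hard inequality $\inf\le\sup$, proved via stretch maps along complete laminations) is the main theorem of the very paper the statement is taken from, and it is substantially deeper than the statement itself; in Thurston's paper Theorem 3.1 is established first, by a separate and more elementary argument, before any of the stretch-map machinery is developed. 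You acknowledge this in your last paragraph, which is to your credit, but it means the proposal does not stand as a self-contained proof --- it proves ``Theorem 8.5 implies Theorem 3.1,'' which is a true and not unreasonable implication, but one whose hypothesis carries essentially all the content.

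Everything downstream of that is essentially right and is a nice piece of soft rigidity: equal areas by Gauss--Bonnet, $|\mathrm{Jac}\,f|\le 1$ from $1$-Lipschitzness, the area formula forcing $|\mathrm{Jac}\,f|\equiv 1$ and a.e.\ injectivity, and the upgrade to a local isometry and then a global one. Two steps deserve more care. First, the ``normalization'' of the $f_n$ near the cusps is not something you can impose for free without disturbing the Lipschitz constants; the correct way to rule out compact divergence is the $\pi_1$-surjectivity argument (a map whose image eventually lies in an annular or cusp neighbourhood cannot be homotopic to a homeomorphism), exactly as in Lemma \ref{lem:maximal} of this paper. Second, the passage from ``$df_x\in O(2)$ a.e., $f$ $1$-Lipschitz, a.e.\ injective'' to ``$f$ is a local isometry'' is the one place where a genuine argument is needed (a fold map shows that a.e.-orthogonal differential plus $1$-Lipschitz alone does not suffice); your ball-volume comparison is the right idea and can be completed, but as written it is a sketch. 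Neither of these is fatal; the structural issue is the reliance on the minimal-stretch theorem.
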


In particular, if all the simple closed geodesics in $Y$ are at most as long as the corresponding geodesics in $X$, then they have the same length as the ones in $X$. This is false for hyperbolic surfaces with infinite area \cite{Parlier,Papadopoulos,Gendulphe}.

In his paper, W. Thurston proves the implication (e) $\Rightarrow$ (d) directly, and his proof applies to surfaces of infinite area as well \cite[Proposition 3.5]{Thurston}.

\begin{thm}[W. Thurston]
Let $h:X \to Y$ be a homeomorphism between hyperbolic surfaces. If $h$ does not increase the length of any homotopy class of simple closed curve, then $h$ does not increase the length of any homotopy class of closed curve.
\end{thm}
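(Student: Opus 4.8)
The plan is to approximate an arbitrary closed curve by simple closed multicurves obtained from it by a topological \emph{resolution} of self-intersections, and then to transfer the hypothesis from the approximating multicurves to the curve itself by a $\liminf/\limsup$ argument. First I would reduce to proving $\ell_Y([h(\alpha)])\le\ell_X([\alpha])$ for a single closed curve $\alpha$ in $X$: the hypothesis automatically extends from simple closed curves to simple closed \emph{multicurves} (disjoint curves have disjoint geodesic representatives, and the length of the union is the sum of the lengths), and if $[\alpha]$ is trivial or peripheral to a cusp then so is $[h(\alpha)]$ and both sides vanish. So I may assume $[\alpha]$ is essential; let $\gamma$ be its geodesic representative in $X$, with $s$ transverse double points (so $s$ is the minimal self-intersection number of the free homotopy class).

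For each $n\ge 1$ I would build a simple closed multicurve $\sigma_n$ on $X$ by taking $n$ pairwise disjoint, coherently oriented parallel copies of $\gamma$ inside a thin neighbourhood of it and performing, at each of the resulting $sn^2$ crossings, the unique orientation-compatible smoothing. The outcome is an embedded $1$-manifold, i.e.\ a simple closed multicurve, in the homology class $n[\alpha]$. The key point is that this recipe is purely combinatorial, so the homotopy class of $\sigma_n$ depends only on $[\alpha]$ and $n$; pushing forward by $h$ therefore exhibits $h(\sigma_n)$, up to homotopy, as the analogous resolution of $n$ parallel copies of the $Y$-geodesic $\gamma'$ representing $[h(\alpha)]$.

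Next come the two estimates. For the \emph{upper bound in $X$}: if the parallel copies lie within the $\eta_n$-neighbourhood of $\gamma$ and each smoothing is done inside an $O(\eta_n)$-ball about its crossing, then the constructed curve has length at most $n\,\ell_X([\alpha])\cosh\eta_n + O(s\,n^2\eta_n)$, so choosing $\eta_n\to 0$ with $n^2\eta_n\to 0$ and passing to geodesic representatives gives $\ell_X(\sigma_n)\le n\,\ell_X([\alpha])+o(n)$. For the \emph{lower bound in $Y$}: using that $h(\sigma_n)$ is homotopic to the resolution of $n$ parallel copies of $\gamma'$, I would lay down $m\approx\ell_Y([h(\alpha)])/\epsilon$ short geodesic arcs transverse to $\gamma'$, equally spaced along it and avoiding its double points; the resolution meets each such arc essentially $n$ times, hence so does its geodesic representative, and a chopping argument forces $\ell_Y(h(\sigma_n))\ge n\big(\ell_Y([h(\alpha)])-O(\epsilon)\big)$, whence $\liminf_n \tfrac1n\ell_Y(h(\sigma_n))\ge\ell_Y([h(\alpha)])$ after letting $\epsilon\to0$. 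Applying the hypothesis to the simple multicurve $\sigma_n$ gives $\ell_Y(h(\sigma_n))\le\ell_X(\sigma_n)$; combining this with the two estimates, dividing by $n$ and letting $n\to\infty$ yields $\ell_Y([h(\alpha)])\le\ell_X([\alpha])$.

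I expect the main obstacle to be the lower bound: one must understand the geodesic representative of the resolved multicurve well enough to see that, after rescaling by $1/n$, its length \emph{converges} to $\ell_Y([h(\alpha)])$ rather than merely being bounded below by a fixed fraction of it. Conceptually the cleanest formulation is that $\tfrac1n\sigma_n\to\gamma$ in the space of geodesic currents and that the length functional is continuous and homogeneous there, which would give $\tfrac1n\ell_X(\sigma_n)\to\ell_X([\alpha])$ and, in $Y$, $\tfrac1n\ell_Y(h(\sigma_n))\to\ell_Y([h(\alpha)])$ at once; but because the theorem is asserted for surfaces of infinite area, some care is needed either to set up geodesic currents in that generality or to make the elementary transversal estimate uniform. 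A secondary point to nail down is that resolutions of a free homotopy class are topologically canonical, so that $h(\sigma_n)$ really is homotopic to the resolution computed directly from $\gamma'$.
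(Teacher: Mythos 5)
First, a point of comparison: the paper does not actually prove this statement; it quotes it from Thurston's preprint (his Proposition~3.5), so there is no in-paper argument to measure your proposal against. It therefore has to stand on its own, and unfortunately it does not: the lower bound you single out as the delicate step, namely $\liminf_n \tfrac1n\,\ell_Y(h(\sigma_n)) \ge \ell_Y([h(\alpha)])$, is false, and the same defect already kills the estimate in $X$.

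The problem is that the orientation-coherent smoothing of a transverse crossing of a geodesic shortens the free homotopy class by a definite amount depending on the crossing angle, and the number of turns made by $\sigma_n$ grows linearly in $n$, so the loss does not disappear after dividing by $n$. Take the simplest case: $\gamma$ a figure-eight geodesic with one double point and lobes $a$ and $b$. The oriented smoothing reroutes every incoming strand of one branch onto the outgoing side of the other, so the smoothed picture is just the disjoint union of the two lobes, and $\sigma_n$ is homotopic to $n$ parallel copies of $a$ together with $n$ parallel copies of $b$. Hence $\tfrac1n\ell(\sigma_n)=\ell(a)+\ell(b)$, which is \emph{strictly} less than $\ell(\gamma)=\ell(a')+\ell(b')$, where $a',b'$ are the lobes viewed as geodesic loops based at the double point: those loops have genuine corners, so their closed-geodesic representatives are shorter by a fixed positive amount independent of $n$. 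The geodesic-currents reformulation fails for the same reason and shows the failure is structural: $i(\sigma_n,\sigma_n)=0$ for every $n$, the self-intersection form is continuous on currents, and $i(\gamma,\gamma)>0$ for a non-simple geodesic, so no rescaling of $\sigma_n$ can converge to the current of $\gamma$; its limits are measured laminations. (The transversal-and-chopping variant has the further difficulty that intersection numbers with arcs are not homotopy invariants, so ``the geodesic representative also meets each arc $n$ times'' does not follow.) What your argument actually delivers, after dividing by $n$, is the inequality $\ell_Y(\mu)\le\ell_X(\mu)$ for the limiting measured lamination $\mu=\lim\tfrac1n\sigma_n$ --- the standard and comparatively easy extension of the hypothesis from simple closed curves to laminations. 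In the figure-eight case this reads $\ell_Y(a)+\ell_Y(b)\le\ell_X(a)+\ell_X(b)$, and since the corner defects $\ell(ab)-\ell(a)-\ell(b)$ in $X$ and in $Y$ are not controlled by the simple length spectra, it cannot be upgraded to $\ell_Y(ab)\le\ell_X(ab)$. So the passage from simple to non-simple classes, which is the entire content of the theorem, is not achieved; any correct proof (in particular Thurston's, which the paper cites) must use a mechanism other than this resolution estimate.
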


In \cite{Masumoto}, Masumoto shows that the implication (c) $\Rightarrow$ (a) is false in general for surfaces of infinite area.

\begin{thm}[Masumoto]
Let $X$ be a hyperbolic surface of positive genus and infinite area. Then there exist a hyperbolic surface $Y$ and a homeomorphism $h:X \to Y$ which is homotopic to a holomorphic map but is not homotopic to a conformal embedding.
\end{thm}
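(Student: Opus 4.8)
The plan is to construct the pair $(Y,h)$ by hand; infinite area is essential here — by Thurston's theorem finite-area surfaces are rigid — so the construction must exploit the funnels. I will describe the construction when $X$ is a one-holed torus, that is, of genus $1$ with a single funnel end; the general positive-genus case is analogous, one cutting along a non-separating curve inside a handle and leaving the rest of $X$ fixed. Cut $X$ along a non-separating simple closed geodesic $\alpha$: the result is a surface $P$ with two geodesic boundary circles $\alpha_1,\alpha_2$ (the two copies of $\alpha$) and one funnel end, and $X=P/g$, where $g$ conformally identifies collar neighbourhoods of $\alpha_1$ and $\alpha_2$. The target will be $Y=P'/g'$, where $P'$ has the same topological type as $P$ but with extra conformal room (say a wider funnel, which by the Schwarz Lemma is compatible with the existence of a holomorphic map), and $g'$ is a gluing chosen compatibly with a holomorphic map $\varphi\colon P\to P'$ constructed below.

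The heart of the matter is to produce a holomorphic map $\varphi\colon P\to P'$ that is homotopic to a homeomorphism, agrees with some conformal embedding $\iota$ on the collars of $\alpha_1$ and $\alpha_2$, and is \emph{not injective}, its image covering a region of $P'$ twice. Taking $g'=\iota g\iota^{-1}$, the map $\varphi$ descends to a holomorphic map $f\colon X=P/g\to P'/g'=Y$ which is still homotopic to a homeomorphism $h$; then $h$ satisfies (c) by the Schwarz Lemma, while $f$, and hence $h$, is non-injective. Producing $\varphi$ is genuinely delicate: a non-injective holomorphic map between finite-type hyperbolic surfaces of the same type that is homotopic to a homeomorphism cannot be proper (a proper one would be a degree-one branched covering, hence a biholomorphism, by Riemann--Hurwitz), so $\varphi$ cannot be obtained by pulling back a complex structure through a smooth model. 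I would instead write it down explicitly on an annular model, placing a critical point (or a two-sheeted overlap) in the extra room of $P'$ while keeping $\varphi$ equal to $\iota$ near $\alpha_1\cup\alpha_2$ so that it descends, and arranging that $Y$ is again a hyperbolic surface of the correct type.

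It remains to show that $h$ does not satisfy (a), i.e.\ that no conformal embedding $X\to Y$ is homotopic to $h$. Being homotopic to a conformal embedding is strictly stronger than being homotopic to a holomorphic map, and the gap is detected by a conformal invariant that is non-degenerate precisely when $X$ has positive genus — in the circle of ideas of Shiba and Masumoto, a ``span''. Concretely, a conformal embedding $X\to Y$ in the prescribed homotopy class would force $Y$ into a certain (span-constrained) region of moduli space determined by the conformal type of $X$, whereas admitting a holomorphic map from $X$ only forces $Y$ into a strictly larger region; $Y$ is to be chosen in the difference, and the non-injective map $f$ witnesses membership in the larger region. I expect the main obstacle to be making this span/extremal-length inequality explicit and verifying it for the constructed example — and this is exactly where positive genus is used, since for a planar $X$ the relevant invariant degenerates and a different argument is needed, as carried out later in the paper.
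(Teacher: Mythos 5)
This statement is quoted from Masumoto's paper [Mas00] and is not proved in the present paper, so there is no in-house proof to compare against; the closest analogue here is the proof of Theorem \ref{thm:immbutnotemb}, which goes in the opposite direction (fixing the codomain $Y$ and building the domain) via Jenkins--Strebel differentials, slit surgery, and Ioffe's rigidity theorem. Measured against either that argument or Masumoto's actual one, your proposal has two genuine gaps. First, the central object --- a holomorphic map $\varphi\colon P\to P'$ homotopic to a homeomorphism, non-injective, and compatible with the gluings --- is never constructed, and the method you sketch for producing it cannot work as described: you cannot ``keep $\varphi$ equal to $\iota$ near $\alpha_1\cup\alpha_2$'' while ``placing a critical point in the extra room,'' because holomorphic maps are rigid. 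If $\iota$ extends to a conformal embedding of $P$, the identity principle forces $\varphi\equiv\iota$ on all of $P$, killing non-injectivity; and even if $\iota$ is defined only on the collars, there is no cut-and-paste procedure that modifies a holomorphic map away from an open set. Producing such a $\varphi$ requires a global construction --- in this paper it is done by cutting $Y$ along horizontal arcs of a Jenkins--Strebel differential and gluing in an extra half-disk, which is precisely the content of the proof of Theorem \ref{thm:immbutnotemb} and is not a routine step.

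Second, and more seriously, the conclusion that $h$ is \emph{not} homotopic to any conformal embedding is never established. You correctly observe that exhibiting a non-injective holomorphic map in the homotopy class does not by itself exclude an injective one, and you then defer to ``a span/extremal-length inequality'' that you describe as the main obstacle and do not verify. That inequality \emph{is} the theorem: the whole difficulty is producing a conformal invariant (Shiba--Masumoto spans in the positive-genus case, or Ioffe's theorem on slit complements in this paper) that certifies non-embeddability while tolerating a holomorphic map. As written, the proposal establishes condition (c) for the constructed $h$ (granting the unconstructed $\varphi$) but proves nothing about the failure of condition (a). A smaller point: the theorem only asserts that $h$ is homotopic to a holomorphic map, so insisting that the map be non-injective is a red herring unless the non-injectivity is actually converted into an obstruction to embedding, which is exactly the step that is missing.
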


As Masumoto observes, it follows that (e) does not imply (a) since (c) implies (e). It turns out that the right invariant for conformal embeddings is not hyperbolic length but extremal length. Indeed, condition (a) holds if and only if $h$ does not increase the extremal length of any homotopy class of weighted multicurve \cite{Dylan}. This was first proved by Kerckhoff for closed surfaces \cite[Theorem 4]{Kerckhoff} and by Masumoto for one holed tori \cite[Theorem 5.1]{Masumoto2}. In both of these situations it suffices to compare the extremal length of homotopy classes of simple closed curves, but this is not the case in general.

The goal of this paper is to show that the implications (b) $\Rightarrow$ (a) and (d) $\Rightarrow$ (c) are false whenever $X$ and $Y$ are neither disks, annuli, or closed surfaces. In particular, our method handles surfaces of genus zero, which answers a question of Masumoto. Our results are complementary to Masumoto's: we start with the codomain $Y$ and construct a domain $X$ with the desired properties. 

The first theorem addresses the relationship between conformal embeddings and holomorphic immersions.

\begin{thm}\label{thm:immbutnotemb}
Let $Y$ be a hyperbolic surface which is neither a disk, an annulus, a closed surface, nor the triply punctured sphere. Then there exist a hyperbolic surface $X$ all of whose ends are funnels and a homeomorphism $h : X \to Y$ which is homotopic to a holomorphic immersion but is not homotopic to a conformal embedding.
\end{thm}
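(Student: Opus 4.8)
The plan is to reduce the absence of a conformal embedding to a strict inequality between moduli of annuli, and then to manufacture $X$ by a surgery on $Y$ along a closed geodesic, using an annular cover to supply a holomorphic immersion that fails to be injective.

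First I would set up the reduction. It suffices to produce a hyperbolic surface $X$, homeomorphic to $Y$ with all ends funnels, together with a holomorphic immersion $f\colon X\to Y$ whose induced map $f_*$ is an isomorphism of fundamental groups carrying peripheral classes to peripheral classes, and a simple closed curve $\mu$ in $X$ such that the largest embedded annulus around $\mu$ in $X$ has strictly larger modulus than the largest embedded annulus around $\gamma:=f(\mu)$ in $Y$. Such an $f_*$ is induced by a homeomorphism $h\colon X\to Y$, and since $Y$ is aspherical $f$ is then homotopic to $h$, so condition~(b) holds. If $h$ were also homotopic to a conformal embedding $g\colon X\hookrightarrow Y$, then $g$ would be a biholomorphism onto an open subsurface and hence could not increase extremal length; recalling that the extremal length of the free homotopy class of a simple closed curve equals the reciprocal of the maximal modulus of an embedded annulus in that class, this would give $\operatorname{EL}_X([\mu])\ge\operatorname{EL}_Y([g(\mu)])=\operatorname{EL}_Y([\gamma])>\operatorname{EL}_X([\mu])$, a contradiction. (Only the elementary monotonicity of extremal length under conformal embeddings is used, not the full characterization of condition~(a).) So the theorem follows once the construction is carried out.

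For the construction, I would pick an essential simple closed geodesic $\gamma$ in $Y$ of positive length: a non-peripheral one if $Y$ has any, and otherwise, $Y$ being a hyperbolic pair of pants distinct from the triply punctured sphere and therefore having a funnel, the core geodesic of that funnel. Let $\ell=\ell_Y(\gamma)$ and let $p_\gamma\colon\widehat Y_\gamma\to Y$ be the cover associated with $\langle\gamma\rangle$, so $\widehat Y_\gamma$ is an annulus of modulus $\pi/\ell$. Since $Y$ is not an annulus, $p_\gamma$ has infinite degree and in particular is not injective; hence the modulus $m_0$ of the maximal embedded annulus around $\gamma$ in $Y$ satisfies $m_0<\pi/\ell$, since an embedded annulus of modulus $\pi/\ell$ around $\gamma$ would lift to all of $\widehat Y_\gamma$ and force $p_\gamma$ to be injective. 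Fix $m\in(m_0,\pi/\ell)$ and a sub-annulus $\widehat A_m\subset\widehat Y_\gamma$ of modulus $m$ around the central geodesic. I would then form $X$ by excising the maximal collar of $\gamma$ from $Y$ and welding $\widehat A_m$ in its place --- conformally, along the real-analytic seams --- so that $X$ carries a holomorphic map $f\colon X\to Y$ equal to $p_\gamma$ on $\widehat A_m$ and to the identity away from it; simultaneously I would replace each cusp neighborhood of $Y$ by a sub-annulus declared to be a funnel end of $X$, with $f$ the inclusion there. Then $X$ is homeomorphic to $Y$ with all ends funnels, $f$ is a holomorphic immersion (locally biholomorphic away from the seams, while across a seam the two sides assemble into a genuine neighborhood on which $f$ is a local biholomorphism, because $p_\gamma$ is holomorphic and the welding is conformal), and $f$ is \emph{not} injective, since $p_\gamma$ is non-injective on any sub-annulus of modulus exceeding $m_0$ (such an annulus would otherwise embed in $Y$ with core $\gamma$, contradicting the maximality of $m_0$). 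Finally $\widehat A_m$ is an embedded annulus in $X$ whose core $\mu$ corresponds to $\gamma$, so the maximal embedded annulus around $\mu$ in $X$ has modulus at least $m>m_0$, which is precisely the maximal modulus around $\gamma$ in $Y$ --- exactly the input needed for the reduction.

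The main obstacle is executing the surgery so that $f$ is genuinely holomorphic and an immersion across the seams, and checking that $f_*$ is an isomorphism preserving the peripheral structure (hence $f$ homotopic to a homeomorphism). I expect the technical heart to be a careful, possibly quasiconformal, description of this conformal welding of a larger annulus into the slot vacated by a smaller one, together with the verification that the enlarged collar yields the claimed modulus gain. This is also exactly where the exclusion of closed surfaces is forced: were $Y$ closed, the surface $X$ would be closed as well, so a degree-one holomorphic map $X\to Y$ would be a biholomorphism and could not be non-injective --- the surplus modulus introduced by fattening the collar has to be absorbed near an end of $Y$, and closed surfaces have none.
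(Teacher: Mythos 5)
Your reduction to extremal length is reasonable (monotonicity of extremal length under conformal embeddings, plus the Jenkins--Strebel identity between $\operatorname{EL}_Y([\gamma])$ and the reciprocal of the supremal modulus of embedded annuli homotopic to $\gamma$), and it is genuinely different from the paper, which instead rules out conformal embeddings via Ioffe's rigidity theorem for slit complements. But the construction of $X$ and of the holomorphic immersion $f$ --- which you correctly identify as ``the technical heart'' --- does not work as described, and this is a real gap, not a deferred technicality. If you excise the maximal collar $K$ of $\gamma$ from $Y$ and try to glue in the larger annulus $\widehat A_m\subset\widehat Y_\gamma$, the two formulas for $f$ must agree along the seam: from the outside, $f$ is the identity, so the seam must map onto $\partial K$; from the inside, $f=p_\gamma$, so the seam maps onto $p_\gamma(\partial\widehat A_m)$. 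Since $m>m_0$, the circles $\partial\widehat A_m$ lie outside the maximal region where $p_\gamma$ is injective, so $p_\gamma(\partial\widehat A_m)$ consists of immersed, self-intersecting curves in $Y$ that are not equal to $\partial K$ (and are not even embedded). There is no identification of the seams making $f$ continuous, let alone holomorphic. What you are describing is essentially grafting a fatter cylinder along $\gamma$: that does produce a new conformal structure, but it does not come equipped with a holomorphic map back to $Y$ --- the surplus modulus cannot be ``absorbed'' along a closed seam.

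The reason the paper cuts along \emph{arcs} rather than closed curves is exactly to avoid this matching problem: a slit has two shores inside a single coordinate chart of $Y$, so one can reattach an extra sheet (a half-disk $D'$ mapping by the forgetful projection) along one shore and keep the identity on the other, and the result is automatically an unramified Riemann surface spread over $Y$. If you want to keep your extremal-length endgame, you would still need to produce the immersion by some such slit or sheeted construction and then verify the modulus gain for an embedded annulus in $X$ --- at which point you are close to reproving the paper's construction, with Ioffe's theorem replaced by an extremal length computation. A secondary point to tighten: your strict inequality $m_0<\pi/\ell$ needs an argument that the supremum of moduli of embedded annuli is not merely approached but bounded away from $\pi/\ell$ when $Y$ is not an annulus; the lifting argument you give only rules out an annulus \emph{attaining} modulus $\pi/\ell$.
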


\begin{remark}
By gluing a punctured disk to each ideal boundary component of a pair of pants one obtains the triply punctured sphere. Thus every pair of pants embeds conformally in the triply punctured sphere. However, if $Y$ is any pair of pants with infinite area, then the above theorem says that we can find another pair of pants $X$ with no cusp which immerses holomorphically in $Y$ but does not embed conformally.
\end{remark}

We use Theorem \ref{thm:immbutnotemb} to show that the converse of the implication (c) $\Rightarrow$ (d) (which is a consequence of the Schwarz Lemma) is false.

\begin{thm}\label{thm:noconverse}
Let $Y$ be a hyperbolic surface which is neither a disk, an annulus, a closed surface, nor the triply punctured sphere. Then there exist a hyperbolic surface $X$ all of whose ends are funnels, a homeomorphism $h : X \to Y$, and a constant $0<r < 1$ such that $\ell_Y([h(\alpha)]) \leq r \cdot \ell_X([\alpha])$ for every closed curve $\alpha$ in $X$ but such that $h$ is not homotopic to a holomorphic map.
\end{thm}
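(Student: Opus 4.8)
Here is the strategy I would follow.

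The plan is to bootstrap from Theorem~\ref{thm:immbutnotemb}. Applied to $Y$, it produces a hyperbolic surface $X_0$ with only funnel ends, a homeomorphism $h_0\colon X_0\to Y$, and a holomorphic immersion $f\simeq h_0$ such that $h_0$ is not homotopic to a conformal embedding. First I would record that $h_0$ already contracts every closed geodesic strictly and uniformly. The immersion $f$ cannot be a covering map: a covering map homotopic to a homeomorphism has degree one, hence is a biholomorphism, which would make $h_0$ homotopic to a conformal embedding. By the rigidity clause of the Schwarz--Pick lemma, a holomorphic map between hyperbolic surfaces that is not a covering has $\|df_p\|<1$ in the Poincar\'e metrics at every point $p$. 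Since all ends of $X_0$ are funnels, its convex core $C_0$ is compact, so $r_0:=\max_{C_0}\|df\|<1$; and since every closed geodesic of $X_0$ lies in $C_0$, we get $\ell_Y([h_0(\alpha)])=\ell_Y([f(\alpha)])\le r_0\,\ell_{X_0}([\alpha])$ for every closed curve $\alpha$.

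This does not yet prove the theorem, because $h_0$ is homotopic to the holomorphic map $f$; notice that strict contraction together with the failure of (a) is not enough by itself, since $h_0$ has both properties. What is needed is a pair $(X,h)$ that keeps the strict inequality but for which condition (c) also fails. So I would look for a hyperbolic surface $X$ with only funnel ends, homeomorphic to $Y$, together with a homeomorphism $h\colon X\to Y$ such that (i) $\ell_Y([h(\alpha)])\le r\,\ell_X([\alpha])$ for all closed $\alpha$ and some fixed $r<1$, and (ii) no holomorphic map is homotopic to $h$. The candidate $X$ is a carefully chosen conformal perturbation of $X_0$ --- ``large'' enough that a holomorphic map into $Y$ would be forced to transport more conformal area than $Y$ can provide, but not so large as to shorten any closed geodesic by too much. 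Condition (i) would then be checked by applying the Schwarz Lemma to the conformal inclusion $X_0\hookrightarrow X$ and controlling how much this inclusion can expand lengths of closed curves under the inverse, provided the perturbation is kept mild relative to $1/r_0$.

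For (ii) I would argue by contradiction. If $g\colon X\to Y$ is holomorphic with $g\simeq h$, pass to universal covers and let $Y_1\subseteq Y$ be the image of $g$; then $Y_1$ is an open subsurface whose inclusion induces an isomorphism on $\pi_1$, and $g$ factors as $X\xrightarrow{g_1}Y_1\hookrightarrow Y$ with $g_1$ surjective holomorphic and homotopic to a homeomorphism. If $g_1$ were a covering it would be a biholomorphism, making $h$ homotopic to a conformal embedding; so, having arranged (a) to fail for $h$, the map $g_1$ is not a covering, hence its derivative (in the Poincar\'e metrics of $X$ and $Y_1$) satisfies $\|dg_1\|<1$ pointwise and $\|dg_1\|\le r'<1$ on the compact convex core $C_X$ of $X$. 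By Gauss--Bonnet, the convex cores of $X$ and of $Y_1$ both have area $-2\pi\chi(Y)$, so since $g_1$ is surjective, $\int_X\|dg_1\|^2\,dA_X=\int_{Y_1}\#g_1^{-1}\,dA_{Y_1}\ge\mathrm{Area}(Y_1)\ge -2\pi\chi(Y)$, whereas $\int_{C_X}\|dg_1\|^2\,dA_X\le r'^2\,(-2\pi\chi(Y))$. Hence the funnels of $X$ are forced to carry a definite amount of this area forward into $Y_1$; the final step is to choose $X$ so that this is impossible --- exploiting that $g$ is $1$-Lipschitz and that $X_0$ does not conformally embed in $Y$ --- which yields the contradiction.

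The crux, and the step I expect to be hardest, is the construction of the conformal structure $X$: the requirement that a holomorphic map $X\to Y$ be obstructed by the area estimate and the requirement that the length inequality persist with some $r<1$ pull in opposite directions, since the former wants $X$ large while the latter wants $X$ close to $X_0$. Threading between them uniformly over all admissible $Y$ --- in particular in the planar case, which answers Masumoto's question --- is the real content; it may be convenient to first iterate Theorem~\ref{thm:immbutnotemb}, replacing $Y$ by $X_0$ and composing the immersions, so as to make $r_0$ as small as needed before perturbing.
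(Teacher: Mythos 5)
Your high-level strategy is the right one and matches the paper's in outline: start from the immersion $f\colon X_0\to Y$ of Theorem \ref{thm:immbutnotemb}, note that $f$ is not a covering so $\|df\|<1$ pointwise and hence $\le r_0<1$ on the compact convex core, and then enlarge $X_0$ just enough to destroy every holomorphic map in the homotopy class while keeping a uniform strict contraction of lengths. You also correctly identify the tension between the two requirements. But the two steps that actually resolve that tension are missing. First, your proposed obstruction to holomorphic maps --- the Gauss--Bonnet/area estimate --- does not close: since $X$ has funnel ends, $\int_X\|dg_1\|^2\,dA_X$ has no a priori upper bound (the funnels can carry arbitrarily much area into $Y_1$, which itself may have infinite area), so the lower bound $\ge\operatorname{Area}(Y_1)$ yields no contradiction, and you explicitly defer the decisive choice of $X$ to an unspecified construction. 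The paper's mechanism is different and is the heart of the argument: it builds a one-parameter family of enlargements $X_j$ (gluing cylinders $S^1\times[0,j)$ to the ideal boundary) and proves, via Montel's theorem applied on an exhaustion plus a separate argument ruling out $j=\infty$ (where all ends become cusps and a degree count forces a biholomorphism), that the set of $j$ admitting a holomorphic map $X_j\to Y$ in the correct homotopy class has a finite maximum $i$; any $j>i$ then admits no such map, with no area estimate needed.

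Second, the length inequality for the enlarged surface is asserted rather than proved. You need $\sup_\alpha \ell_Y([h(\alpha)])/\ell_{X_j}([\alpha])<1$ uniformly over the infinitely many homotopy classes $\alpha$, and ``keeping the perturbation mild relative to $1/r_0$'' is exactly the point at issue: one must show that the supremum of the ratio, which is $\le 1$ by Schwarz at the maximal parameter $i$ and is shown to be $<1$ there via the equality case of Schwarz--Pick on the compact convex core, remains $<1$ for $j$ slightly beyond $i$. The paper gets this uniformity by compactifying the set of simple closed curves inside the space of projective measured laminations, using the continuity of length functions on that compact space (Thurston, Bonahon) and the continuity of $j\mapsto X_j$ in Teichm\"uller space, and finally invoking Thurston's Proposition 3.1 to pass from simple closed curves to all closed curves. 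Some such compactness or distortion input (the laminations argument, or a uniform bound like Wolpert's lemma on length ratios in terms of Teichm\"uller distance) is indispensable here; without it the claim that a small enlargement shrinks all closed geodesics by a uniformly bounded factor is unproved.
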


We do not know whether (c) implies (b).

\section{The Schwarz Lemma and Montel's Theorem} \label{sec:schwarz}

We first recall the invariant version of the Schwarz Lemma \cite[p.22]{Milnor}.

\begin{thm}[Schwarz--Pick]
If $f:X \to Y$ is a holomorphic map between hyperbolic surfaces, then $\|d_xf\| \leq 1$ for every $x \in X$, where the norm of the derivative is taken with respect to the Poincar\'e metrics on $X$ and $Y$. If equality holds at a single point, then $f$ is a covering map and hence a local isometry.
\end{thm}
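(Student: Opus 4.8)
The plan is to reduce the statement to the classical Schwarz lemma on the unit disk by passing to universal covers, and then to treat the equality case separately by means of a completeness argument.

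First I would exploit the defining property of the Poincar\'e metric. By assumption the universal covers of $X$ and $Y$ are both biholomorphic to $\mathbb{D}$, and the Poincar\'e metric on each surface is the unique metric for which the covering projections $\pi_X : \mathbb{D} \to X$ and $\pi_Y : \mathbb{D} \to Y$ are local isometries. Fixing $x \in X$, I would choose $\tilde{x} \in \pi_X^{-1}(x)$ and lift $f$ through $\pi_Y$ to a holomorphic map $\tilde{f} : \mathbb{D} \to \mathbb{D}$ with $\pi_Y \circ \tilde{f} = f \circ \pi_X$; such a lift exists since $\mathbb{D}$ is simply connected. Because $\pi_X$ and $\pi_Y$ are local isometries, the chain rule gives $\|d_x f\| = \|d_{\tilde{x}} \tilde{f}\|$, all norms taken in the relevant Poincar\'e metrics. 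Thus it suffices to establish both the inequality and the rigidity statement for the holomorphic self-map $g := \tilde{f}$ of $\mathbb{D}$ equipped with its Poincar\'e metric $\tfrac{2\,|dz|}{1-|z|^2}$.

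Next I would prove the disk case by normalizing with automorphisms. Writing $z_0 = \tilde{x}$, I would pick M\"obius automorphisms $\phi, \psi$ of $\mathbb{D}$ with $\phi(0) = z_0$ and $\psi(g(z_0)) = 0$, and set $h = \psi \circ g \circ \phi$, a holomorphic self-map of $\mathbb{D}$ fixing the origin. Automorphisms of $\mathbb{D}$ are isometries of the Poincar\'e metric, so $\|d_{z_0} g\| = \|d_0 h\| = |h'(0)|$, the last equality holding because $h(0)=0$ forces the conformal density factors at the source and target points to cancel. The classical Schwarz lemma applied to $h$ --- obtained by noting that $h(z)/z$ extends holomorphically across $0$ and then applying the maximum principle on the disks $\{|z|\le r\}$ and letting $r \to 1$ --- yields $|h'(0)| \le 1$, hence $\|d_x f\| \le 1$.

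For the rigidity statement, suppose $\|d_x f\| = 1$ at some $x$. Then $|h'(0)| = 1$ in the above normalization, so the equality case of the classical Schwarz lemma (again via the maximum principle, which forces $h(z)/z$ to be a unimodular constant) shows that $h$ is a rotation, hence an automorphism of $\mathbb{D}$; undoing the normalization, $\tilde{f} = g$ is an automorphism of $\mathbb{D}$ and in particular an isometry of the Poincar\'e metric. It follows that $f = \pi_Y \circ \tilde{f} \circ (\pi_X|_U)^{-1}$ is a local isometry near each point, where $U$ is any evenly covered neighborhood, and that $f$ is surjective since $\tilde{f}$, $\pi_X$, and $\pi_Y$ are. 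The step I expect to be the main obstacle is upgrading ``local isometry'' to ``covering map'': here one invokes the standard Riemannian fact that a local isometry from a \emph{complete} connected manifold onto a connected manifold is a covering map. This is exactly where completeness of the Poincar\'e metric on $X$ is used, since it guarantees that geodesics, and hence arbitrary paths, in $Y$ lift through $f$, giving the path- and homotopy-lifting properties that characterize a covering. As $f$ is then a covering map and, as shown above, already a local isometry, this completes the proof.
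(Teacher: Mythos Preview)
Your argument is correct and is essentially the standard proof of Schwarz--Pick: lift to the universal cover, normalize with disk automorphisms, apply the classical Schwarz lemma, and in the equality case use completeness to promote the resulting local isometry to a covering map. The paper itself does not prove this statement; it merely quotes it with a reference to Milnor's book, so there is nothing to compare your approach against.

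One small remark on exposition: your surjectivity sentence is slightly telegraphic. The point is that $f\circ\pi_X=\pi_Y\circ\tilde f$ and the right-hand side is surjective (since $\tilde f$ is a biholomorphism of $\mathbb{D}$ and $\pi_Y$ is onto), hence $f$ is onto. In fact you do not even need to argue surjectivity separately, since the completeness criterion you invoke (a local isometry from a complete connected Riemannian manifold to a connected Riemannian manifold is a covering map) already yields surjectivity as part of its conclusion.
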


Equivalently, holomorphic maps between hyperbolic surfaces do not increase distances. In particular, they do not increase the lengths of homotopy classes of closed curves. 

Another consequence is that holomorphic maps are equicontinuous. This leads to a compactness result known as Montel's Theorem \cite[p.34]{Milnor}. We say that a sequence of maps $(f_n)_{n=1}^\infty$ between two topological spaces $X$ and $Y$ is \emph{compactly divergent} if for every compact sets $K \subset X$ and $L \subset Y$, there exists an $n_0 \in \mathbb{N}$ such that $f_n(K)\cap L = \varnothing$ for all $n\geq n_0$. A family $\mathcal{F}$ of holomorphic maps between Riemann surfaces is \emph{normal} if every sequence in $\mathcal F$ contains either a subsequence converging to a holomorphic map or a compactly divergent subsequence.

\begin{thm}[Montel]
The set of all holomorphic maps between two hyperbolic surfaces is normal.
\end{thm}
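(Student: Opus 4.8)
The plan is to deduce normality from two facts: the maps in the family are uniformly Lipschitz by the Schwarz--Pick theorem, and the Poincar\'e metric makes each hyperbolic surface a complete, locally compact length space, hence proper (closed balls are compact) by Hopf--Rinow. I would write $d_X$ and $d_Y$ for the Poincar\'e distances and fix a base point $x_0 \in X$. Given an arbitrary sequence $(f_n)$ of holomorphic maps $X \to Y$, the whole argument turns on the behavior of the points $y_n := f_n(x_0) \in Y$. Since each $f_n$ satisfies $d_Y(f_n(x),f_n(x')) \leq d_X(x,x')$, the family is equi-Lipschitz, and in particular equicontinuous.

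First I would set up the dichotomy. Because closed balls in $Y$ are compact, either some compact set $L_0 \subset Y$ contains $y_{n_k}$ for infinitely many indices $n_k$, or every compact subset of $Y$ contains only finitely many of the $y_n$. In the second case I claim the sequence $(f_n)$ is already compactly divergent. Indeed, given compact sets $K \subset X$ and $L \subset Y$, put $R = \sup_{x \in K} d_X(x,x_0) < \infty$ and choose $y_* \in Y$ and $S$ with $L \subseteq \overline{B}(y_*,S)$. Since $f_n$ is $1$-Lipschitz, $f_n(K) \subseteq \overline{B}(y_n,R)$; if $f_n(K)$ met $L$ at some $f_n(x)$, then $d_Y(y_n,y_*) \leq d_Y(y_n,f_n(x)) + d_Y(f_n(x),y_*) \leq R+S$, so $y_n$ would lie in the compact ball $\overline{B}(y_*,R+S)$, which happens for only finitely many $n$ by hypothesis. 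Hence $f_n(K) \cap L = \varnothing$ for all large $n$.

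In the first case I would extract a locally uniformly convergent subsequence. After passing to the subsequence along which $y_{n_k} \to y_\infty \in Y$, the Lipschitz bound gives, for any compact $K \subset X$ and $x \in K$, the estimate $d_Y(f_{n_k}(x),y_\infty) \leq d_X(x,x_0) + d_Y(y_{n_k},y_\infty) \leq R+1$ for all large $k$, so the images $f_{n_k}(K)$ eventually lie in the fixed compact ball $\overline{B}(y_\infty,R+1)$. The family is thus equicontinuous and pointwise relatively compact, so Arzel\`a--Ascoli for maps into the complete metric space $Y$, together with a diagonal argument over a compact exhaustion of $X$, yields a subsequence converging uniformly on compact sets to a continuous map $f : X \to Y$.

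It remains to check that the limit $f$ is holomorphic, which is the only step with genuine content beyond soft topology. Working in holomorphic charts on $X$ and $Y$, in which the Poincar\'e distance is locally bi-Lipschitz equivalent to the Euclidean distance, the convergence $f_{n_k} \to f$ is locally uniform in the ordinary sense, so the Weierstrass convergence theorem shows that $f$ is holomorphic in each chart, hence a holomorphic map $X \to Y$. This gives normality: every sequence has either a compactly divergent subsequence or a subsequence converging to a holomorphic map. The main obstacle I anticipate is bookkeeping rather than depth, namely verifying carefully that the Lipschitz estimate confines images to compact sets so that Arzel\`a--Ascoli applies with a complete target and the limit cannot escape to the ideal boundary of $Y$; everything else reduces to completeness and properness of the Poincar\'e metric and to Weierstrass's theorem in local coordinates.
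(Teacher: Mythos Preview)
Your argument is correct and is precisely the standard proof: equicontinuity from the Schwarz--Pick Lemma, properness of the Poincar\'e metric via Hopf--Rinow, the dichotomy on the base-point images, Arzel\`a--Ascoli for the bounded case, and Weierstrass in local charts for holomorphy of the limit. Note, however, that the paper does not give its own proof of Montel's Theorem at all; it merely states the result with a reference to Milnor, after remarking that equicontinuity (from the Schwarz--Pick Lemma) is what drives it---which is exactly the mechanism you spell out in detail.
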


\section{Immersions not homotopic to embeddings}

A \emph{slit complement} in a hyperbolic surface $Y$ is an open subset $Z \subset Y$ whose complement is a finite union of analytic arcs such that there exists a non-zero integrable holomorphic quadratic differential on $Y$ which extends analytically to the ideal boundary of $Y$ and is non-negative along that ideal boundary as well as along the arcs in $Y \setminus Z$. An arc along which a quadratic differential $q$ is non-negative is said to be \emph{horizontal} for $q$. A good reference for the definition and basic properties of quadratic differentials is \cite{Strebel}.

We will need the fact that slit complements cannot be deformed too much by conformal embeddings \cite[Theorem 3.1]{Ioffe}.

\begin{thm}[Ioffe]
Let $Z$ be a slit complement for a quadratic differential $q$ on a hyperbolic surface $Y$. If $f: Z \to Y$ is a conformal embedding homotopic to the inclusion map $Z \hookrightarrow Y$, then $f$ is an isometry with respect to the metric $|q|^{1/2}$ and $f(Z)$ is a slit complement for $q$.
\end{thm}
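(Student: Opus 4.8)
The plan is to recover the theorem from the Reich--Strebel (``main'') inequality applied to the conformal embedding $f$: for a conformal map the usual distortion terms all vanish, so the inequality collapses to an equality that forces rigidity.

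First I would set $\phi := f^{*}q$, a holomorphic quadratic differential on $Z$, and note the area bound
\[
\|\phi\|_{Z}=\int_{Z}|f^{*}q|=\int_{f(Z)}|q|\;\le\;\int_{Y}|q|=\|q\|,
\]
with equality exactly when $Y\setminus f(Z)$ is $|q|$-null (recall $Z$ differs from $Y$ by the $|q|$-null union of slits). The theorem amounts to promoting this to $f^{*}q=q|_{Z}$: since $f$ is holomorphic and injective, even $|f^{*}q|=|q|$ makes $f$ a local isometry of the singular flat metric $|q|^{1/2}$, hence an isometry onto $f(Z)$, and matching the quadratic differentials themselves (not just their moduli) matches the horizontal foliations as well.

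For the reverse inequality I would run a length--area estimate along horizontal trajectories of $q$. In the natural parameter $w=\xi+i\eta$ of $q$ (so $q=dw^{2}$ and $|q|$ is Euclidean area), for almost every $\eta$ the horizontal leaf $\ell_{\eta}$ is a closed curve contained in $Z$, being disjoint from the finitely many slits (which lie on finitely many leaves); it has $q$-length $b(\eta)$ and $\int b(\eta)\,d\eta=\|q\|$. Because $f$ is homotopic to the inclusion $Z\hookrightarrow Y$, the curve $f(\ell_{\eta})$ is freely homotopic to $\ell_{\eta}$ in $Y$, and since a closed horizontal trajectory has the least $q$-length in its free homotopy class, the $q$-length of $f(\ell_{\eta})$, which equals $\int_{\ell_{\eta}}|f^{*}q|^{1/2}$ along the leaf, is at least $b(\eta)$. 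Integrating over $\eta$ and then invoking the Cauchy--Schwarz inequality on $Z$ gives
\[
\|q\|=\int b(\eta)\,d\eta\;\le\;\int_{Z}|f^{*}q|^{1/2}\,|q|^{1/2}\;\le\;\|f^{*}q\|_{Z}^{1/2}\,\|q\|^{1/2},
\]
so $\|q\|\le\|\phi\|_{Z}$, which together with the area bound yields $\|\phi\|_{Z}=\|q\|$ and equality throughout. When $q$ is not of Jenkins--Strebel type the horizontal leaves need not be closed, and this is where the real work lies: one then appeals to the general Reich--Strebel inequality (for instance by approximating $q$ by Jenkins--Strebel differentials, or by integrating directly over the horizontal measured foliation).

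Finally I would read off the equality cases. Equality in Cauchy--Schwarz forces $|f^{*}q|$ to be constant in the $w$-chart, hence --- being the modulus of a holomorphic function on the connected surface $Z$ --- identically equal to $|q|$; so $f^{*}q=c\,q|_{Z}$ for a unimodular constant $c$. Equality in ``$q$-length of $f(\ell_{\eta})\ge b(\eta)$'' for almost every $\eta$ forces $f(\ell_{\eta})$ to be a closed horizontal trajectory of $q$, which pins down $c=1$, so $f^{*}q=q|_{Z}$ and $f$ is an isometry of $|q|^{1/2}$ onto $f(Z)$. Equality in the area bound says $Y\setminus f(Z)$ is $|q|$-null; as $f$ is a $q$-isometry it extends to an isometry of metric completions, and the completion of $(Z,|q|^{1/2})$ is obtained from $Y$ by splitting each of the finitely many horizontal slit-arcs into its two sides, so $Y\setminus f(Z)$ is a finite union of horizontal analytic arcs and $f(Z)$ is a slit complement for $q$. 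Besides the general (non-Jenkins--Strebel) case of the length--area step, the other point requiring care is the boundary behaviour of $f$: one must check that $f$ extends appropriately across the slits and to the ideal boundary of $Y$, so that the completion argument and the identification of $Y\setminus f(Z)$ go through.
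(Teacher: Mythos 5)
The first thing to say is that the paper does not prove this statement at all: it is imported from Ioffe's 1975 paper (Theorem 3.1 there), with the author only remarking that Ioffe's original, stronger conclusion --- that $f$ must \emph{equal} the inclusion --- is false, while his proof does establish the weakened version quoted here. So there is no in-paper argument to compare against. What can be said is that your length--area strategy is in substance the classical Reich--Strebel/minimal-norm argument that Ioffe himself uses: the area bound $\|f^*q\|_Z\le\|q\|$, the reverse bound via $q$-lengths of horizontal leaves plus Cauchy--Schwarz, and the equality analysis forcing $f^*q=q$ are the right moves, and your identification of the unimodular constant via horizontality of the curves $f(\ell_\eta)$, and of $Y\setminus f(Z)$ via the metric completion, are correct in outline.

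The genuine gap is the one you flag yourself but do not close: as written, the reverse inequality $\|q\|\le\|f^*q\|_Z$ is established only when $q$ is of Jenkins--Strebel type, whereas the theorem is stated for an arbitrary nonzero integrable $q$ that is horizontal along the ideal boundary and the slits. Deferring the general case to ``the general Reich--Strebel inequality'' is not a proof step: that inequality (the minimal norm property for the horizontal foliation, proved by closing up long segments of recurrent trajectories in minimal components) is precisely the technical heart of the theorem, and the alternative of ``approximating $q$ by Jenkins--Strebel differentials'' is delicate because the approximants need not keep the slits horizontal, so the hypotheses are not preserved along the approximation. (For the application in this paper the differentials used are Jenkins--Strebel, so your argument would suffice there, but not for the statement as given.) Two smaller points to make explicit: the values of $\eta$ whose leaves meet a slit form a null set because each slit lies on a single leaf, so $\int b(\eta)\,d\eta=\|q\|$ survives deleting them; and in the final step you must actually verify that the local $|q|^{1/2}$-isometry $f$ extends real-analytically to the two sides of each slit and to the ideal boundary, since that extension is what guarantees $Y\setminus f(Z)$ is a finite union of \emph{analytic} horizontal arcs rather than something wilder, and hence that $f(Z)$ satisfies the definition of a slit complement.
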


In his paper, Ioffe states that $f$ has to be equal to the inclusion map, but this is false \cite{couch}. His proof of the above weaker statement is nonetheless correct. We use this theorem to construct holomorphic immersions homotopic to homeomorphisms but not homotopic to conformal embeddings.

\begin{proof}[Proof of Theorem \ref{thm:immbutnotemb}]
First assume that $Y$ is not a pair of pants, and let $\gamma$ be a non-trivial simple closed curve in $Y$ which is not boundary parallel. Let $q$ be the Jenkins-Strebel quadratic differential on $Y$ whose closed horizontal trajectories have full measure in $Y$ and are all homotopic to $\gamma$ \cite[Chapter VI, \S 21]{Strebel}. Observe that the ideal boundary of $Y$ is horizontal for $q$ and that $q$ has at least one zero on each ideal boundary component. Indeed, if $q$ has no zero on an ideal boundary component, then it has a non-empty annulus of closed horizontal trajectories parallel to that boundary component. 

For every cusp of $Y$, pick a horizontal arc $[0,1) \to Y$ converging to the cusp. Do this in such a way that the arcs are pairwise disjoint. Note that there is indeed at least one horizontal trajectory emanating from every cusp, since $q$ extends meromorphically at the cusps with at most simple poles there. If $Y$ has no cusp, then pick a horizontal arc $[0,1) \to Y$ converging to a zero of $q$ on the ideal boundary. In either case, remove the chosen arcs from $Y$ and call the resulting slit complement $Z$. Then take a half-disk $D \approx \{z \in \mathbb{C} : |z|<1, \re z\geq 0 \}$ lying on the right side of one of the chosen arcs and glue a copy $D'$ of $D$ to the left side of the slit in $Z$. The resulting Riemann surface $X=Z \sqcup D'$ comes with a holomorphic immersion $f:X \to Y$ which is the inclusion map on $Z$ and is the forgetful map identifying $D'$ with $D\subset Y$ elsewhere.

\begin{figure}[htp]
\includegraphics[scale=.8]{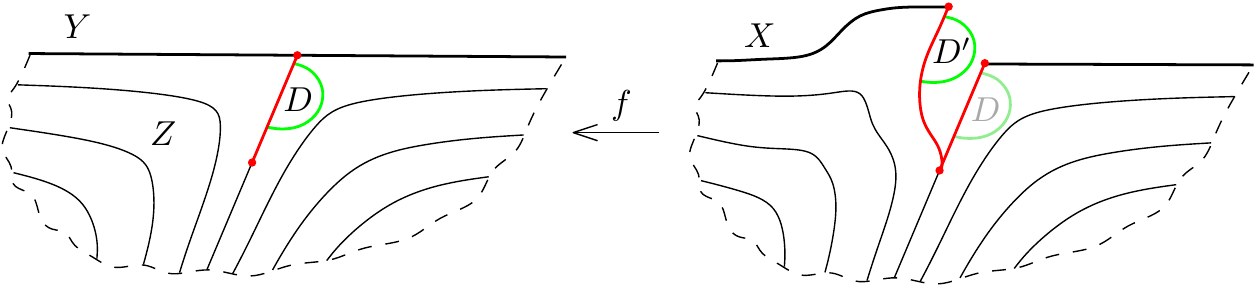}
\caption{The construction of the surface $X$ and the immersion $f$. In this picture, the arc converges to a zero of the quadratic differential on an ideal boundary component of $Y$. One may think of $D'$ as lying above $D$ and think of $f$ as the downwards projection.} \label{fig:slitmap}
\end{figure} 

The map $f$ is homotopic to a homeomorphism and by construction no end of $X$ is a cusp. Suppose there is a conformal embedding $g: X \to Y$ homotopic to $f$. Then the restriction of $g$ to the slit complement $Z\subset Y$ is a conformal embedding and the complement of $g(Z)$ contains the open set $g(D')$, which contradicts Ioffe's Theorem. Thus no such $g$ exists.

If $Y$ is a pair of pants but not the triply punctured sphere, then at least one of its ends $e$ is a funnel. In this case, let $q$ be the Jenkins-Strebel quadratic differential on $Y$ all of whose closed horizontal trajectories are parallel to $e$, cut $Y$ along two disjoint horizontal arcs converging to the other two ends, and repeat the above construction.

\end{proof}

\section{Maximal holomorphic maps in one-parameter families}

Given a hyperbolic surface $X$ with at least one funnel, we define enlargements $X_i \supset X$ as follows. We first set $X_0=X$, and for $i \in (0, \infty]$, we define $X_i$ by gluing a copy of the cylinder $S^1 \times [0,i)$ to each ideal boundary component of $X$ in a prescribed way. In other words, we choose once and for all an analytic parametrization of each ideal boundary component of $X$ by $S^1$, and glue the cylinders to $X \cup \partial X$ with these parametrizations. Whenever $i \leq j$, the inclusion map from $S^1 \times [0,i)$ to $S^1 \times [0,j)$ extends to a conformal embedding $X_i \hookrightarrow X_j$ which is equal to the identity on $X$. We call $(X_i)_{i\in I}$ a \emph{one-parameter family}, where $I=[0,\infty]$. 

\begin{figure}[htp]
\includegraphics[scale=.8]{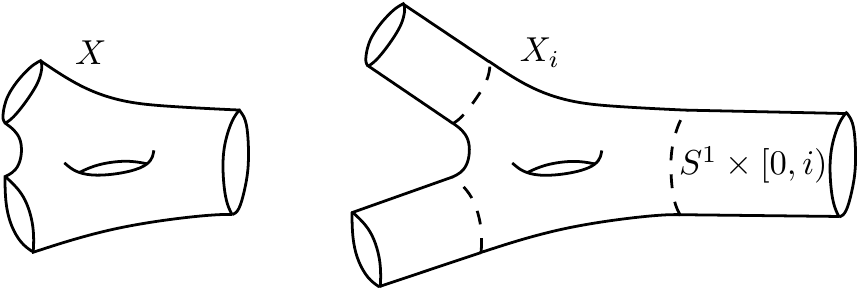}
\caption{A hyperbolic surface $X$ and an enlargement $X_i$. Here $X$ is represented conformally as the interior of a compact surface with boundary.} \label{fig:extension}
\end{figure} 

It is easy to see that $X_i$ moves continuously in Teichm\"uller space for $i \in [0,\infty)$. However, $X_\infty$ lives in a different Teichm\"uller space than $X$. Indeed, the cylinder $S^1 \times [0,\infty)$ is biholomorphic to a punctured disk, so that each funnel end of $X$ becomes a cusp in $X_\infty$.

There is a lot of freedom in the above construction since the parametrizations of the ideal boundary components can be chosen arbitrarily. A concrete choice is to realize $X$ as the interior of a complete bordered hyperbolic surface $\overline X$ with totally geodesic boundary and parametrize the boundary components proportionally to hyperbolic length. Note that the resulting $X_\infty$ is different from the Nielsen extension of $\overline X$, which is $\overline X$ with a funnel glued isometrically to each boundary component. Indeed, every funnel has finite modulus and is thus conformally distinct from the half-infinite cylinder $S^1 \times [0,\infty)$. In the Poincar\'e metric, $X_\infty$ has finite area whereas the Nielsen extension of $\overline X$ has infinite area.

However, the precise construction of the one-parameter family $(X_i)_{i \in I}$ is irrelevant; all that really matters is that there is a conformal embedding $e_{ij}:X_i \hookrightarrow X_j$ homotopic to a homeomorphism whenever $i \leq j$, that the embeddings $e_{ij}$ form a direct system, that the union of the embedded surfaces $\bigcup_{i<j} e_{i j}(X_i)$ equals $X_j$ for every $j>0$, and that all the ends of $X_\infty$ are cusps.     

\begin{lem} \label{lem:maximal}
Let $X$ and $Y$ be hyperbolic surfaces which are neither disks, annuli, or closed surfaces. Suppose that $f:X \to Y$ is a holomorphic map which is homotopic to a homeomorphism, but is not homotopic to a conformal embedding. Given a one-parameter family $(X_i)_{i \in I}$, there exists a largest $i \in [0,\infty)$ such that there exists a holomorphic map $f_i : X_i \to Y$ with $f_i \circ e_{0i}$ homotopic to $f$.
\end{lem}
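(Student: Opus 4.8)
The plan is to show that the set
\[
S := \{\, i \in [0,\infty) : \text{there is a holomorphic } f_i : X_i \to Y \text{ with } f_i \circ e_{0i} \simeq f \,\}
\]
is a closed bounded interval containing $0$, so that it has a largest element. Since $f_0 = f$ works, $0 \in S$. If $j \in S$ and $0 \le i \le j$, then $f_j \circ e_{ij} : X_i \to Y$ is holomorphic and $(f_j \circ e_{ij}) \circ e_{0i} = f_j \circ e_{0j} \simeq f$, so $i \in S$; thus $S$ is an interval $[0,s)$ or $[0,s]$ with $s = \sup S \in [0,\infty]$. It remains to prove that $S$ is closed (so that $s \in S$ when $s < \infty$) and that $s < \infty$.

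\emph{Closedness.} Suppose $i_n \nearrow s$ with $i_n \in S$ and $s < \infty$ (we may assume $s>0$, else $S=\{0\}$). Fix $j < s$. For $n$ large we have $i_n > j$, and $g_n := f_{i_n} \circ e_{j,i_n} : X_j \to Y$ is holomorphic with $g_n \circ e_{0j} \simeq f$; in particular each $g_n$ is homotopic to a fixed homeomorphism $h_j : X_j \to Y$, since $e_{0j}$ is a homotopy equivalence and $f$ is homotopic to a homeomorphism. By Montel's theorem a subsequence of $(g_n)$ either converges locally uniformly to a holomorphic map $X_j \to Y$ or is compactly divergent. The latter is impossible: choose a non-peripheral element of $\pi_1(X_j)$ (it exists because $X_j$ is neither a disk nor an annulus) and realize it by a closed geodesic $\gamma \subset X_j$; its image $(g_n)_*[\gamma] = (h_j)_*[\gamma]$ is non-peripheral in $\pi_1(Y)$, hence hyperbolic, with a fixed closed geodesic representative $\beta \subset Y$; by the Schwarz--Pick lemma the loop $g_n(\gamma)$ has length at most $\ell_{X_j}(\gamma)$, so it is freely homotopic to $\beta$ with uniformly bounded length, whence it lies in a fixed compact neighborhood of $\beta$ (a bounded neighborhood of a closed geodesic cannot penetrate far into the cusps or funnels of $Y$), contradicting compact divergence. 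So for every $j<s$ a subsequence of $(f_{i_n} \circ e_{j,i_n})$ converges, and a diagonal argument over some sequence $j_m \nearrow s$ yields a single subsequence along which $\tilde f_j := \lim_n f_{i_n} \circ e_{j,i_n} : X_j \to Y$ exists for all $j < s$. These limits satisfy $\tilde f_j = \tilde f_{j'} \circ e_{j j'}$ for $j < j'$ by uniqueness of limits on overlaps, so since $X_s = \bigcup_{j<s} e_{j s}(X_j)$ they glue to a holomorphic $f_s : X_s \to Y$. Finally $f_s \circ e_{0s}$ restricts on each $X_j$ to $\tilde f_j \circ e_{0j} = \lim_n (f_{i_n} \circ e_{0,i_n})$, a locally uniform limit of maps homotopic to $f$; evaluating on a finite generating set of loops shows this limit induces the same map on $\pi_1$ as $f$, hence (as $Y$ is aspherical and $X$ has the homotopy type of a graph) $f_s \circ e_{0s} \simeq f$, i.e.\ $s \in S$.

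\emph{Boundedness.} Suppose $s = \infty$. The same diagonal argument, now with $j \to \infty$ and using $X_\infty = \bigcup_{i<\infty} e_{i\infty}(X_i)$, produces a holomorphic $f_\infty : X_\infty \to Y$ with $f_\infty \circ e_{0\infty} \simeq f$, hence (as $e_{0\infty}$ is a homotopy equivalence) a homotopy equivalence. Here I use that all ends of $X_\infty$ are cusps. If $\delta$ is a primitive cusp loop of $X_\infty$, then $\ell_{X_\infty}([\delta]) = 0$, so by Schwarz--Pick $\ell_Y([(f_\infty)_*\delta]) = 0$, which forces $(f_\infty)_*\delta$ to be parabolic (it is nontrivial because $f_\infty$ is $\pi_1$-injective) and primitive (because $f_\infty$ is a $\pi_1$-isomorphism). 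On a punctured-disk neighborhood of the cusp, $f_\infty$ cannot extend holomorphically across the puncture (else $\delta$ would die in $\pi_1(Y)$), so its image escapes into the cusp of $Y$ determined by $(f_\infty)_*\delta$, and the induced degree-one map of punctured disks shows $f_\infty$ restricts to a biholomorphism from a cusp neighborhood of $X_\infty$ onto a cusp neighborhood of $Y$. As $X_\infty$ is a compact set together with finitely many such cusp neighborhoods, $f_\infty$ is proper; hence $Y$ has no funnels (a funnel could have no preimage under a proper map whose domain has only cusp ends), and filling in the punctures, $f_\infty$ extends to a proper holomorphic map $\hat f_\infty : \hat X_\infty \to \hat Y$ of closed surfaces of some degree $d$. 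The cusp points of $X_\infty$ map to distinct cusp points of $Y$ each with local degree $1$, and the whole $\hat f_\infty$-preimage of the cusp points of $Y$ lies among the cusp points of $X_\infty$; counting with multiplicity gives $d \cdot (\#\text{cusps of }Y) \le \#\text{cusps of }X_\infty \le \#\text{cusps of }Y$, so $d = 1$. Thus $\hat f_\infty$, and therefore $f_\infty : X_\infty \to Y$, is a biholomorphism, and $f_\infty \circ e_{0\infty} : X \to Y$ is a conformal embedding homotopic to $f$ --- contradicting the hypothesis. Hence $s < \infty$ and $s = \max S$.

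\emph{Main obstacle.} The two uses of the geometry near the ends are the delicate steps: in the closedness argument, showing that the image of a fixed geodesic stays in a fixed compact set (which requires the structure of thin parts and funnels) so as to exclude compact divergence; and in the boundedness argument, showing that $f_\infty$ is a local biholomorphism near each cusp, hence proper, so that a degree count applies. The remaining steps are bookkeeping with the direct system $(e_{ij})$ and repeated appeals to Montel's theorem.
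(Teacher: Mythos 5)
Your proof is correct and follows essentially the same strategy as the paper: Montel's theorem plus an exhaustion and identity-principle gluing to realize the supremum as an honest holomorphic map on $X_s$, closedness of homotopy classes under locally uniform limits, and a cusps-to-cusps degree count to rule out $s=\infty$. The only real divergence is in excluding compact divergence: you argue that the image of a fixed non-peripheral closed geodesic has bounded length and therefore stays in a fixed compact neighborhood of its geodesic representative in $Y$, whereas the paper observes that if the image of a compact core of $X_j$ misses a compact core of $Y$ then it lies in an end neighborhood and the induced map on $\pi_1$ cannot be surjective; both arguments are valid.
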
 
\begin{proof}
First observe that no sequence of holomorphic maps $g_n : X \to Y$ homotopic to a fixed homeomorphism $h: X \to Y$ can be compactly divergent. Indeed, $X$ and $Y$ deformation retract onto compact subsets $K$ and $L$, and if $g_n(K)$ is disjoint from $L$, then $g_n(K)$ is contained in an annulus or punctured disk in $Y$. In this case, the homomorphism induced by $g_n$ between fundamental groups is not surjective, so that $g_n$ cannot be homotopic to a homeomorphism. 

Let $i$ be the supremum of the set of indices $j$ for which there exists a holomorphic map $f_j : X_j \to Y$ such that $f_j \circ e_{0j}$ is homotopic to $f$. Using Montel's Theorem, we can extract a holomorphic limit $f_i : X_i \to Y$ from any sequence $(f_{j_n})_{n=1}^\infty$ of such maps for which $j_n \to i$ as $n\to \infty$. Since the domain $X_{j_n}$ is not constant, we actually need to apply Montel's Theorem to larger and larger fixed domains. To be more precise, for every $k<i$ the sequence $(f_{j_n} \circ e_{k j_n})_{n=1}^\infty$ of holomorphic maps from $X_k$ to $Y$ is eventually defined and admits a subsequence converging to a holomorphic map $g_k : X_k \to Y$. To simplify notation, we assume that the original sequence converges. If $k< l < i$, then we claim that $(f_{j_n} \circ e_{l j_n})_{n=1}^\infty$ converges to a holomorphic map $g_l : X_l \to Y$ such that $g_l\circ e_{kl} = g_k$. This is because every subsequence of that sequence has a converging subsequence. The limit $L$ of such a subsequence has to satisfy $L \circ e_{kl} = g_k$ since $(f_{j_n} \circ e_{l j_n})\circ e_{kl} = f_{j_n} \circ e_{k j_n}$. Thus any two limits $L_1$ and $L_2$ of different subsequences have to agree on the open set $e_{kl}(X_k)$ and hence on all of $X_l$ by the identity principle. It follows that the sequence $(f_{j_n} \circ e_{l j_n})_{n=1}^\infty$ converges to the common limit $g_l$ of the converging subsequences. We can therefore define the map $f_i$ on $X_i$ by setting $f_i(x)=g_l\circ e_{li}^{-1}(x)$ for any $l<i$ such that $x \in e_{li}(X_l)$. The restriction $f_i\circ e_{0i}$ of $f_i$ to $X_0=X$ is homotopic to $f$ since homotopy classes of maps between surfaces with finite topology are closed \cite[Corollary 2.7]{couch}.

If $i=\infty$, then $X_i$ has a finite number of ends all of which are cusps. Since cusps can only be mapped holomorphically to cusps (by the Schwarz Lemma), all the ends of $Y$ are also cusps. Furthermore, $f_i$ extends to a holomorphic map $\what{f_i}$ between the compactifications $\what{X_i}$ and $\what{Y}$ where the cusps have been filled in, by the Schwarz Lemma and Riemann's removable singularity theorem. The extension $\what{f_i}$ is a proper holomorphic map between closed Riemann surfaces and hence has some degree $d$. The preimages of the cusps of $Y$ are the cusps of $X_i$, and these two finite sets have the same cardinality. Therefore, if $d>1$ then $\what{f_i}$ has a critical point at some cusp $c$ of $X_i$. In this case, $f_i$ maps small simple loops around $c$ to loops wrapping more than once around $\what{f_i}(c)$, and thus cannot be homotopic to a homeomorphism. It follows that $d=1$, so that $f_i$ is a biholomorphism. But then $f_i \circ e_{0i}$ is a conformal embedding homotopic to $f$, a contradiction. We conclude that $i<\infty$.
\end{proof}

We proceed to prove that the converse of the Schwarz Lemma is false. The proof uses the notion of measured geodesic laminations, for which the reader may consult \cite{Bonahon2}.

\begin{proof}[Proof of Theorem \ref{thm:noconverse}]
Let $f_0: X_0 \to Y$ be a holomorphic map which is homotopic to a homeomorphism but not homotopic to a conformal embedding, where every end of $X_0$ is a funnel. Such a map exists by Theorem \ref{thm:immbutnotemb}. Construct a one-parameter family $(X_j)_{j \in I}$ as above and let $i$ be the largest real number such that there exists a holomorphic map $f_i:X_i \to Y$ whose restriction to $X_0$ is homotopic to $f_0$. The existence of such an $i$ is guaranteed by Lemma \ref{lem:maximal}.

Let $\mathcal{S}$ be the set of homotopy classes of non-trivial simple closed curves in $X_0$. Every $c \in \mathcal{S}$ can be considered as a homotopy class in $X_i$ and in $Y$ via the inclusion $e_{0i}:X_0 \hookrightarrow X_i$ and the map $f_0$ respectively. The length $\ell_{X_i}(c)$ is positive for every $c \in \mathcal{S}$ as all the ends of $X_i$ are funnels. Let $r=\sup_{c \in \mathcal{S}} \ell_Y(c)/\ell_{X_i}(c)$. By the Schwarz Lemma, $r$ is bounded above by $1$. In fact, for every $c \in \mathcal{S}$ we have $\ell_Y(c)/\ell_{X_i}(c) \leq \max_{x \in \gamma} \|d_x f_i\| \leq 1$, where $\gamma \subset X_i$ is the unique geodesic in the homotopy class $c$. This is because $\ell_Y(c)$ is at most the length of $f_i(\gamma)$ which is equal to $\int_\gamma \|d_x f_i\| dx$, hence bounded above by $\ell_{X_i}(c)\cdot \max_{x \in \gamma} \|d_x f_i\|$.

Since all the simple closed geodesics of $X_i$ are contained in the convex core of $X_i$ and since the latter is compact, the inequality $r \leq \|d_x f_i\|$ holds for some $x \in X_i$. Indeed, given a sequence $(c_n)_{n=1}^\infty\subset \mathcal{S}$ such that $\ell_Y(c_n)/\ell_{X_i}(c_n)\to r$, let $x_n \in \gamma_n$ be such that $ \|d_{x_n} f_i\|= \max_{x \in \gamma_n} \|d_x f_i\|$. Then $\ell_Y(c_n)/\ell_{X_i}(c_n)\leq \|d_{x_n} f_i\|$ by the previous paragraph and for the limit $x$ of any converging subsequence of $(x_n)_{n=1}^\infty$ we get the inequality $r \leq \|d_x f_i\|$.

 If $r=1$, then by the case of equality in the Schwarz Lemma, $f_i$ is a covering map. Any covering map homotopic to a homeomorphism is a homeomorphism, so $f_i$ is a biholomorphism. But then the restriction of $f_i$ to $X_0$ is a conformal embedding homotopic to $f_0$, a contradiction. We conclude that $r$ is strictly less than $1$.

Let $\overline{\mathcal{S}}$ be the closure of $\mathcal{S}$ in the space of projective measured laminations on the convex core of $X_0$ (or its double). This space is compact. Moreover, the length functions $\ell_{X_i}$ and $\ell_Y$ extend continuously to measured laminations  \cite{Thurston}. In fact, the function $(c, Z) \mapsto \ell_Z(c)$ defined on the product of the space of measured laminations with the Teichm\"uller space of $X_0$ is continuous \cite{Bonahon}. Since $X_j$ depends continuously on $j \in [0, \infty)$, the function $(c,j) \mapsto \ell_Y(c)/\ell_{X_j}(c)$ is continuous on $\overline{\mathcal{S}}\times [0,\infty)$. From this and the fact that $r$ is strictly less than $1$, it follows that there exists a $j>i$ such that the ratio $r_j= \sup_{c\in \mathcal{S}} \ell_Y(c)/\ell_{X_j}(c) = \max_{c\in \overline{\mathcal{S}}} \ell_Y(c)/\ell_{X_j}(c)$ is still strictly less than $1$. 

By \cite[Proposition 3.1]{Thurston}, the quantity $r_j$ remains unchanged if we extend the supremum to all homotopy classes of closed curves. Therefore, all the homotopy classes of closed curves in $Y$ are shorter than in $X_j$ by a factor $r_j<1$. Moreover, there is no holomorphic map $f_j : X_j \to Y$ whose restriction to $X_0$ is homotopic to $f_0$ by the maximality of $i$.
\end{proof}

\begin{acknowledgements}
I thank Chris Arretines for bringing W. Thurston's paper to my attention and Ara Basmajian for his encouragement. I also thank Dylan Thurston, Joe Adams, and the anonymous referees their careful reading and constructive comments.
\end{acknowledgements}

\bibliographystyle{amsalpha}

\begin{thebibliography}{KPT15}

\bibitem[Bon88]{Bonahon}
F.~Bonahon, \emph{The geometry of {T}eichm\"uller space via geodesic currents},
  Invent. Math. \textbf{92} (1988), 139--162.

\bibitem[Bon01]{Bonahon2}
\bysame, \emph{Geodesic laminations on surfaces}, Laminations and foliations in
  dynamics, geometry and topology (Stony Brook, NY, 1998), Contemp. Math., vol.
  269, Amer. Math. Soc., Providence, RI, 2001, pp.~1--37.

\bibitem[FB15]{couch}
M.~Fortier~Bourque, \emph{The holomorphic couch theorem}, preprint,
  \href{http://arxiv.org/abs/1503.05473}{\tt arXiv:1503.05473}, 2015.

\bibitem[Gen14]{Gendulphe}
M.~Gendulphe, \emph{Trois applications du lemme de {S}chwarz aux surfaces
  hyperboliques}, preprint, \href{http://arxiv.org/abs/1404.4487}{\tt
  arXiv:1404.4487}, 2014.

\bibitem[Iof75]{Ioffe}
M.S. Ioffe, \emph{Extremal quasiconformal embeddings of {R}iemann surfaces},
  Sib. Math. J. \textbf{16} (1975), 398--411.

\bibitem[Ker80]{Kerckhoff}
S.P. Kerckhoff, \emph{The asymptotic geometry of {T}eichm\"uller space},
  Topology \textbf{19} (1980), 23--41.

\bibitem[KPT15]{Dylan}
J.~Kahn, K.M. Pilgrim, and D.P. Thurston, \emph{Conformal surface embeddings
  and extremal length}, preprint, \href{http://arxiv.org/abs/1507.05294}{\tt
  arxiv:1507.05294}, 2015.

\bibitem[Mas97]{Masumoto2}
M.~Masumoto, \emph{Holomorphic and conformal mappings of open {R}iemann
  surfaces of genus one}, Nonlinear Anal. \textbf{30} (1997), 5419--5424.

\bibitem[Mas00]{Masumoto}
\bysame, \emph{Hyperbolic lengths and conformal embeddings of {R}iemann
  surfaces}, Israel J. Math. \textbf{116} (2000), 77--92.

\bibitem[Mil06]{Milnor}
J.~Milnor, \emph{Dynamics in one complex variable}, third ed., Princeton
  University Press, 2006.

\bibitem[Par05]{Parlier}
H.~Parlier, \emph{Lengths of geodesics on {R}iemann surfaces with boundary},
  Ann. Acad. Sci. Fenn. Math. \textbf{30} (2005), 227--236.

\bibitem[PT10]{Papadopoulos}
A.~Papadopoulos and G.~Th\'eret, \emph{Shortening all the simple closed
  geodesics on surfaces with boundary}, Proc. Amer. Math. Soc. \textbf{138}
  (2010), 1775--1784.

\bibitem[Str84]{Strebel}
K.~Strebel, \emph{Quadratic differentials}, Springer, 1984.

\bibitem[Thu86]{Thurston}
W.P. Thurston, \emph{Minimal stretch maps between hyperbolic surfaces},
  preprint, \href{http://arxiv.org/abs/math/9801039}{\tt arXiv:math/9801039},
  1986.

\end{thebibliography}

\end{document}